\newtheorem{lemma}{Lemma}
\begin{document}
\begin{abstract}
The problem of the approximation of convolutions by accompanying
laws in the scheme of series satisfying the infinitesimality
condition is considered.    It is shown that the quality of approximation depends essentially on the
choice of centering constants.
\end{abstract}

\title[approximation of convolutions by
accompanying laws]{On the approximation of convolutions by
accompanying laws in the scheme of series}

\author{A. Yu. Zaitsev}

\address{St.~Petersburg Department of Steklov Mathematical Institute
\newline\indent
Fontanka 27, St.~Petersburg 191023, Russia\newline\indent
and St.~Petersburg State University}

\email{zaitsev@pdmi.ras.ru}

\keywords{triangular array of sums of independent
random vectors, infinitesimality condition
compound Poisson approximation}

\maketitle

The paper considers the problem of approximation of convolutions
by accompanying laws for a triangular array of sums of independent
random vectors satisfying an infinitesimality condition. We
introduce the necessary notation. We will denote by $ E_a $ the
distribution concentrated at a point $a\in \mathbf{R}^d$, $E=E_0$.
The compound Poisson distribution is given by
\begin{equation}
\label{2} e\left( \lambda F\right) =e^{-\lambda
}\sum_{s=0}^{\infty }\frac{\lambda ^{s}F^{s}}{s!},
\end{equation}
where $\lambda\geq0$, and $F$ is a probability distribution. Here
and below the products and powers of measures are understood in
the convolution sense. The L\'evy distance between one-dimensional
distributions  $ F $ and $ G $ is defined by
\begin{equation}
\label{25} L(F, G)
=\inf\big\{\varepsilon:F(x-\varepsilon)-\varepsilon\leq G(x)\leq
F(x+\varepsilon)+\varepsilon \hbox{ при всех
}x\in\mathbf{R}\big\},
\end{equation}
where  $F(\,\cdot\,)$ is the distribution function corresponding
to the distribution $F$.

The L\'evy--Prokhorov distance between the distributions $ F $ and
$ G $ in a complete separable metric space is defined as follows:
\begin{multline} \label{254} \pi(F, G)
=\inf\big\{\varepsilon:F\{X\}\leq
G\{X^\varepsilon\}+\varepsilon\hbox{ и }G\{X\}\leq
F\{X^\varepsilon\}+\varepsilon\\
\hbox{ for all Borel sets  }X\big\},
\end{multline}where $X^\varepsilon$ is the $\varepsilon$-neighborhood of $ X $.
It is well known that both L\'evy and
L\'evy--Prokhorov distances metrize the weak convergence of
probability distributions.

By the same letter $ c $ we denote positive absolute constants
which may be different even within a single formula. Writing $ A
\ll B $ means that $ A \leq c B $. If the corresponding constant
depends on the dimension $ d $, we will use the notation $ A \ll_d
B $. In the future,
 \,$\log ^\ast b=\max
\left\{ 1,\log b\right\} $, \,for \,$b>0$.

Consider the classical scheme of the series of sums of independent
random variables satisfying the infinitesimality condition (cf.
\cite{AZ, GK, P}). Let
$\{X_{j,k},j=1,2,\ldots;k=1,\ldots,n_{j}\}$, be independent random
variables with distributions
 $F_{j,k}=\mathcal
L(X_{j,k})$. Denote by
\begin{equation} \label{255}F_{j}=\prod_{k=1}^{n_{j}}F_{j,k},\quad
j=1,2,\ldots,
\end{equation}
the distributions of sums $S_{j}=\sum_{k=1}^{n_{j}}X_{j,k}$. The
infinitesimality condition is usually formulated as follows:
$$\max_{1\leq k\leq
n_{j}}\mathbf{P}\big\{\left|X_{j,k}\right|\geq\tau\big\}\to0
$$
 as
$j\to\infty$ for any $\tau>0$. For each fixed $j$, the
distributions of summands
 $X_{j,k}$ are connected with the distributions of summands
  $X_{m,s}$ при других $m\ne j$ only through the
infinitesimality condition. If under this condition the sequence
of distributions $ F_ j $ converges weakly to a probability
distribution~$D$ ($F_j\Rightarrow D$ as $j\to\infty$), then, by
Khinchin's theorem, the distribution ~$ D $ is infinitely
divisible. Conditions of convergence to a given infinitely
divisible distribution ~$D$ (see, e.g.,
 \cite{GK, P})
are usually formulated as conditions which are equivalent to the
convergence to the distribution ~$D$ of the so-called accompanying
infinitely divisible laws
\begin{equation}
\label{434}D_{j}=\prod_{k=1}^{n_{j}}\big(E_{a_{j,k}}\,
e\big(F_{j,k}\,E_{-a_{j,k}}\big)\big).\end{equation}
 The distributions $D_j$ depend on the centering constants
  $a_{j,k}$ which are defined by the equality
 \begin{equation}
\label{0}a_{j,k}=\int_{|x|\leq\tau}x
\,F_{j,k}\{dx\}.\end{equation} The constant $\tau$, involved in
the definition \eqref{0}, does not depend on $j$ and $k$ and it is
arbitrary. For different $\tau$, the numbers
 $a_{j,k}$,
are (generally speaking) different, and thus different are the
distribution~$D_j$ too. However, if, $F_j\Rightarrow D$, then also
$D_j\Rightarrow D$ as $j\to\infty$, for any $\tau$. It seems that
this would imply that the distribution of ~$D_j$ may be considered
as a good infinitely divisible approximation for the distributions
$F_j$, if the latter are defined through a scheme of series
satisfying the infinitesimality condition. However, this is not
always the case, at least if the accuracy of the approximation is
estimated in the L\'evy--Prokhorov metric. In particular, it may
be not so if the sequence of distributions of $ F_ j $ is not
relatively compact (in the topology of weak convergence). A
discussion of this circumstance is the subject of this paper.

The approximation of sequences of distributions that are not
relatively compact in the L\'evy--Prokhorov metric has a special
interest in connection with a recent result by Yu. A. Davydov and
V. I. Rotar'
 \cite{DR} on the characterization of sequences
distributions which are close each to other in the
L\'evy--Prokhorov metric in terms of the closeness of integrals of
uniformly continuous bounded functions.

First of all, note that the infinitesimality condition may be
reformulated as follows:
 \begin{equation} \label{73}\max_{1\leq
k\leq n_{j}}L(F_{j,k}, E)=\varepsilon_{j}\to0\quad\hbox{ as
}j\to\infty.\end{equation} This condition of closeness of a
distribution to the degenerate distribution was proposed by
Kolmogorov \cite{K1, K2} when considering the problem of the
infinitely divisible approximation of convolutions.
Condition~\eqref{73} is closely connected with the condition of
the representability of distributions $F_{j,k}$ in the form
\begin{equation}
\label{13} F_{j,k}=(1-p_{j,k})U_{j,k}+p_{j,k}V_{j,k},
\end{equation}
where $0\leq p_{j,k}\leq1$, the distributions  $U_{j,k}$,
$k=1,\ldots,n_{j}$, are concentrated on the segments $[-\tau_j,
\tau_j]$, $\tau_j\geq0$, $j=1,2,\ldots$, and $V_{j,k}$,
$k=1,\ldots,n_{j}$, are probability distributions, wherein
\begin{equation}
\label{43}\tau_j\to0\quad\hbox{and}\quad p_{j}=\max_{1\leq k\leq
n_{j}}p_{j,k}\to0\quad\hbox{ as }j\to\infty.
\end{equation}
This condition of closeness of a distribution to the degenerate
distribution was used by I.~A.~Ibragimov and E.~L. Presman
\cite{IP} when considering the above mentioned problem of
Kolmogorov.

It was shown that the natural infinitely divisible approximation
for distributions of
 $F_{j}=\prod_{k=1}^{n_{j}}F_{j,k}$ under conditions  \eqref{13} and \eqref{43}
 is given by the distributions
\begin{equation}
\label{436}G_{j}=\prod_{k=1}^{n_{j}}\big(E_{b_{j,k}}\,
e\big(F_{j,k}\,E_{-b_{j,k}}\big)\big).\end{equation}with
\begin{equation}
\label{03}b_{j,k}=\int_{\mathbf{R}}x
\,U_{j,k}\{dx\}.\end{equation}  It is easy to see that the
distributions \eqref{436} have the form  \eqref{434}, but with
replacing the $a_{j,k}$ by $b_{j,k}$. In \cite{ZA}, it was shown
that under conditions
 \eqref{13}, \eqref{43} and \eqref{03} we have
\begin{equation} \label{703}L(F_{j}, G_{j})\ll p_{j}+\tau_{j}\,
\log^{\ast}\tau_{j}^{-1}.
\end{equation}
and \begin{equation} \label{743}\pi(F_{j}, G_{j})\ll
\sum_{k=1}^{n_{j}}p_{j,k}^{2}+p_{j}+\tau_{j}\,\log^{\ast}\tau_{j}^{-1}.\end{equation}
If we assume additionally that the distributions  $V_{j,k}$ are
the same for all  $k=1,\ldots,n_{j}$, then
\begin{equation} \label{7431}\pi(F_{j}, G_{j})\ll
p_{j}+\tau_{j}\,\log^{\ast}\tau_{j}^{-1}.\end{equation}

The proofs of inequalities  \eqref{703}--\eqref{7431},  their
discussion and the history of the problem may be also found in the
monograph \cite{AZ}. Inequality \eqref{703} an optimal (with
respect to order) solution to the problem of Kolmogorov  \cite{K1,
K2} of infinitely divisible approximation of convolutions of
distributions satisfying condition
 \eqref{73}. Inequalities
\eqref{703}--\eqref{7431} are optimal with respect to order for
the dependence of the right-hand sides on $ p_ j $ and $ \tau_ j
$, and in general, the summand $\sum p_{j,k}^{2}$ can not be
removed from the the right-hand side of inequality
 \eqref{743}. Moreover, in general,
the right-hand sides of
 \eqref{703}--\eqref{7431} can not be
significantly reduced if the distribution of $ G_ j $ is replaced
by any other infinitely divisible distributions. Thus,
inequalities
 \eqref{703}--\eqref{7431} can be considered as
a quantitative refinement of classical Khinchin's theorem that the
limit distribution for the distribution $ F_ j $, defined in the
scheme of series of independent random variables satisfying the
infinitesimality condition, must be infinitely divisible, of
course, if it exists. In fact, if $ F_ j \Rightarrow D $, then, by
 \eqref{43} and \eqref{703}, we have the weak convergence
$ G_ j \Rightarrow D $ as $ j \to \infty $. The presence of the
limit distribution implies the relative compactness of the
sequence of distributions $ F_ j $. The distribution $ D $ is
infinitely divisible, as the limit of infinitely divisible
distributions. In the monograph \cite{AZ}, the conditions of
convergence of distributions $ F_ j $ to a given infinitely
divisible distribution $ D $ as $ j \to \infty $ are formulated as
conditions which are equivalent to the convergence to the
distribution ~ $ D $ of accompanying infinitely divisible laws $
G_ j $ with
 $b_{j,k}$, defined by equality  \eqref{03}.

Thus, if we study the  $F_{j}=\mathcal{L}(S_{j})$ and we are
interested in a reasonable infinitely divisible approximation for
the distributions~$ F_ j $, then, under conditions
 \eqref{13} and \eqref{43}, it is given by the distributions~$ G_ j $
 of the form \eqref{436} with $b_{j,k}$ from \eqref{03}. At the same time, if
$a_{j,k}$ are defined by  \eqref{0}, , the distributions $ F_ j $
and $ D_ j $ can be not close in the L\'evy--Prokhorov metric.
\medskip

\noindent\textbf{Example 1.} Consider as a simplest example the
distribution
\begin{equation}
\label{136} F_{j,k}=(1-j^{-1})E+j^{-1}E_1,\quad k=1,\ldots,n_{j}.
\end{equation}
Then conditions \eqref{13} and \eqref{43} are satisfied  with
$$U_{j,k}=E,\quad V_{j,k}=E_1,\quad p_{j,k}=p_{j}=j^{-1}, \quad\tau_{j}=0.$$
Furthermore, $b_{j,k}=0$ and and the distribution $ F_ j $  is
binomial with parameters  $n=n_{j}$ and $p=p_{j}=j^{-1}$. The
accompanying infinitely divisible distribution $ G_ j = e (n_j \,
p_ j \, E_1) $ is the Poisson distribution with parameter $ n_ j
\, p_ j $. According to a result of Yu.~V.~ Prokhorov  \cite{PR},
the distance in variation between distributions $ F_ j $ and $ G_
j $ is bounded from above by $c\,p_{j}=c\,j^{-1}$ and tends to
zero as  ${j\to\infty}$.

As to the approximating accompanying infinitely divisible
distribution $ D_ j $, then, as noted above, it depends on the
choice of the centering constants $a_{j,k}$. If $a_{j,k}$ are
chosen by the formula \eqref{0} with $\tau\geq1$, then
$a_{j,k}=j^{-1}$, $k=1,\ldots,n_{j}$, and \begin{eqnarray}
  D_{j} &=& \prod_{k=1}^{n_{j}}\big(E_{a_{j,k}}\,
e\big(F_{j,k}\,E_{-a_{j,k}}\big)\big) \nonumber\\
   &=& E_{n_{j}j^{-1}}\, e\big(n_{j}(1-j^{-1})\,E_{-j^{-1}}\big)\,
e\big(n_{j}j^{-1}\,E_{1-j^{-1}}\big).\label{483}
\end{eqnarray}Denote
$$
V_j=E_{n_{j}j^{-1}}\, e\big(n_{j}j^{-1}\,E_{1-j^{-1}}\big)
\quad\hbox{and}\quad W_j= e\big(n_{j}(1-j^{-1})\,E_{-j^{-1}}\big).
$$Тогда $D_{j} =V_j\,W_j$. Obviously,
\begin{eqnarray}
  D_{j}\big\{\mathbf{Z}^{1/8}\big\} &=&
  \int_{\mathbf{R}}\Big(\int_{\mathbf{R}}\mathbf{1}\big\{x+y\in\mathbf{Z}^{1/8}\big\}
  W_j\{dx\}\Big)\,V_j\{dy\}\nonumber\\
   &\leq& \sup_{x\in\mathbf{R}}W_j\big\{\mathbf{Z}^{1/8}+x\big\}.\label{4843}
\end{eqnarray}Here
$\mathbf{1}\{\,\cdot\,\}$ is the indicator function, and
$\mathbf{Z}^{1/8}$ means the $ 1 /  8$-neighborhood of the set of
all integers $ \mathbf Z $. The set  $\mathbf{Z}^{1/8}$ consists
of real numbers $y\in\mathbf{R}$ representable as
 $y=z+t$,
where $z\in\mathbf{Z}$ and $|t|<1/8$.
\medskip

Let $U_{\lambda}$ be the Poisson distribution with parameter $
\lambda> 0 $, and $ \xi_ \lambda $ be a random variable with
distribution $ U_ \lambda $. Then
\begin{equation}
\label{139}
\mathbf{P}\big\{\xi_{\lambda}=s\big\}=\frac{e^{-\lambda}\lambda^{s}}{s!},\quad
s=0,1,2,\ldots,
\end{equation}
and $\mathbf{E}\,\xi_{\lambda}=\mathbf{D}\,\xi_{\lambda}=\lambda$.

In order to estimate the right-hand side of  \eqref{4843}, we need
the following elementary property of the Poisson distribution.

\begin{lemma} \label{lms}
There exist absolute positive constants $ c_1 $ and $ c_2 $ such
that
\begin{equation}\label{4669}
\sup_{x\in\mathbf{R}}\mathbf{P}\big\{\delta^{-1}\xi_{\lambda}\in\mathbf{Z}^{1/8}+x\big\}\leq5/8,\quad
\hbox{if } \delta\geq c_1\hbox{ and } \ \lambda\geq
c_2\,\delta^{2}.
\end{equation}
\end{lemma}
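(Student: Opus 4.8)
The plan is to show that when $\delta^{-1}\xi_\lambda$ is rescaled by $\delta^{-1}$ with $\delta$ large, the Poisson mass spreads out enough that no single translate of $\mathbf{Z}^{1/8}$ can capture more than $5/8$ of it. The set $\delta^{-1}\xi_\lambda\in\mathbf{Z}^{1/8}+x$ means $\xi_\lambda$ lands in $\delta\,\mathbf{Z}^{1/8}+\delta x$, i.e. in a union of intervals of length $\delta/4$ with gaps of length $3\delta/4$ between consecutive intervals. So I must bound $\sup_x\mathbf{P}\{\xi_\lambda\in I_x\}$ where $I_x=\bigcup_{m\in\mathbf{Z}}[\delta m+\delta x-\delta/8,\ \delta m+\delta x+\delta/8]$. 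Equivalently, writing the complementary ``forbidden'' set as $J_x$ consisting of intervals of length $3\delta/8$ (wait — $\delta-\delta/4=3\delta/4$), the aim is to show $\mathbf{P}\{\xi_\lambda\in J_x\}\geq 3/8$ for every $x$.

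First I would reduce to the standard-deviation scale: set $\sigma=\sqrt{\lambda}$, so $\xi_\lambda$ has mean $\lambda$ and standard deviation $\sigma$, and by hypothesis $\delta\geq c_1$ is a fixed large number while $\delta^2\leq\lambda/c_2$, hence $\delta\leq\sigma/\sqrt{c_2}$ — the period $\delta$ of the grid is small compared to $\sigma$. The heuristic is that on a window of a few standard deviations around $\lambda$, the Poisson distribution is locally nearly uniform at scale $\delta$ (by a local CLT / ratio estimate: for $|s-\lambda|\ll\lambda^{2/3}$, consecutive ratios $\mathbf{P}\{\xi_\lambda=s+1\}/\mathbf{P}\{\xi_\lambda=s\}=\lambda/(s+1)$ are close to $1$), so the mass in the ``allowed'' intervals is roughly a $1/4$ fraction of the total mass in that window, never exceeding $5/8$ once we add the tails outside the window. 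I would make this rigorous by: (i) choosing an absolute window $[\lambda-A\sigma,\lambda+A\sigma]$ with $A$ a fixed constant large enough that $\mathbf{P}\{|\xi_\lambda-\lambda|>A\sigma\}\leq 1/16$ uniformly in $\lambda$ (Chebyshev gives $A=4$, say); (ii) partitioning this window into consecutive blocks of length $\delta$, each block splitting into an ``allowed'' sub-block of length $\delta/4$ and a ``forbidden'' sub-block of length $3\delta/4$; (iii) within each block, comparing the Poisson mass of the allowed part to that of the forbidden part using near-constancy of the pmf on a block — here the ratio of max to min of $\mathbf{P}\{\xi_\lambda=s\}$ over a block of length $\delta\leq\sigma/\sqrt{c_2}$ inside the window is at most $1+O(\delta\cdot A/\sigma)=1+O(A/\sqrt{c_2})$, which is $\leq 1+\epsilon$ for a small absolute $\epsilon$ once $c_2$ is chosen large enough relative to $A$.

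Carrying out (iii): in a block $B$ of integer length $\delta$ (I may as well take $\delta$ an integer, or absorb the rounding into the constants), if the pmf varies by a factor $\leq 1+\epsilon$ across $B$, then $\mathbf{P}\{\xi_\lambda\in B\cap\text{allowed}\}\leq (1+\epsilon)\cdot\frac{\lceil\delta/4\rceil}{\delta}\cdot\frac{1}{1}\cdot\mathbf{P}\{\xi_\lambda\in B\}\cdot\frac{\delta}{\lfloor 3\delta/4\rfloor}\cdots$ — more cleanly, the allowed count in $B$ is at most $\delta/4+1$ integers and the total is $\delta$ integers, so the allowed mass is at most $(1+\epsilon)\frac{\delta/4+1}{\delta}\,\mathbf{P}\{\xi_\lambda\in B\}\leq\big(\tfrac14+\tfrac1\delta\big)(1+\epsilon)\,\mathbf{P}\{\xi_\lambda\in B\}$. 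Summing over all blocks in the window and adding the tail mass $\leq 1/16$ plus at most one boundary block of mass $\leq$ (something controllable), I get $\sup_x\mathbf{P}\{\delta^{-1}\xi_\lambda\in\mathbf{Z}^{1/8}+x\}\leq(\tfrac14+\tfrac1{c_1})(1+\epsilon)+\tfrac1{16}+\text{(boundary)}$, which is $\leq 5/8$ after fixing $c_1$ large (kills $1/\delta$ and boundary terms) and $c_2$ large (kills $\epsilon$). The main obstacle is item (iii): getting a clean, genuinely absolute-constant bound on the oscillation of the Poisson pmf across a block of length $\delta$ sitting anywhere in a $\Theta(\sigma)$-window around the mean — one must be careful that near the edges of the window $s$ can be as small as $\lambda-A\sigma$, where the ratio $\lambda/(s+1)$ is $1+O(A/\sqrt\lambda)$ per step and hence $(1+O(A/\sqrt\lambda))^{\delta}=1+O(A\delta/\sqrt\lambda)=1+O(A/\sqrt{c_2})$ over a block, so the constants must be chosen in the order ``$A$ first, then $c_1$, then $c_2$'' to avoid circularity. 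Everything else is routine Chebyshev and counting.
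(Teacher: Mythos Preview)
Your argument is correct in outline and can be made rigorous with the bookkeeping you describe, but it is genuinely different from the paper's proof. The paper does not use a central window, Chebyshev tails, or near-constancy of the pmf. Instead it exploits only the \emph{unimodality} of the Poisson distribution: since the pmf is increasing up to the mode and decreasing after it, every ``allowed'' interval $I_j$ of length $\delta/4$ lying entirely on one side of the mode has smaller mass than the adjacent ``forbidden'' interval $J_{j-1}$ (or $J_j$) of length $3\delta/4$ lying between $I_j$ and the mode --- provided $c_1$ is large enough that each $J$ contains more integers than each $I$. This pairs off all but the two $I$-intervals nearest the mode; those two are handled by the concentration-function bound $\max_s\mathbf{P}\{\xi_\lambda=s\}\ll\lambda^{-1/2}$, giving each of them mass at most $c\,\delta\lambda^{-1/2}\leq 1/8$ once $\lambda\geq c_2\delta^2$. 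Summing yields $\mathbf{P}\{\xi_\lambda\in A_x\}\leq 1/4+\mathbf{P}\{\xi_\lambda\notin A_x\}$, hence $\leq 5/8$.

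The paper's route is shorter and avoids tracking oscillation ratios, boundary blocks, and the order in which $A$, $c_1$, $c_2$ are fixed; your route is more quantitative and would transfer to any distribution with a local-CLT flatness property, not just unimodal ones. Both are valid; the paper's is the slicker of the two for this specific lemma.
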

Choosing constants $ 1/8 $ and a $5 / 8$ in the statement of the
lemma is rather arbitrary, it is important that they are some
absolute constants satisfying $ 0 < 1/8 < 1/4 $ and $ 5/8 < 1$.

\begin{proof}[Proof of Lemma\/ \ref{lms}] Let $x\in\mathbf{R}$.
The set $$A_x=\delta\,(\mathbf{Z}^{1/8}+x)$$ is a union of open
intervals  $ I_j $ of length $ \delta / 4$ with centers at the
points $ \delta(j + x) $, $ j \in \mathbf Z $. Therefore,
\begin{equation} \label{619}\mathbf{P}\big\{\xi_{\lambda}\in
A_x\big\} =\sum_{j\in\mathbf{Z}}\mathbf{P}\big\{\xi_{\lambda}\in
I_j\big\}.
\end{equation}
Accordingly, the set $ \mathbf R \setminus A_x $ consists of
closed intervals $ J_j $ of length  $ 3\delta / 4$, located
between intervals of the set
 $A_x$, and \begin{equation}
\label{659}\mathbf{P}\big\{\xi_{\lambda}\in \mathbf{R}\setminus
A_x\big\} =\sum_{j\in\mathbf{Z}}\mathbf{P}\big\{\xi_{\lambda}\in
J_j\big\}.\end{equation} We assume that the segments $ J_j $ are numbered in
such a way that for each $ j $ the interval $ J_j $ is located
directly behind the interval $ I_j $, if we move on the real axis
in the direction  $ + \infty $.

Obviously,
\begin{equation}
\label{439}
\mathbf{P}\big\{\xi_{\lambda}=s\big\}\geq\mathbf{P}\big\{\xi_{\lambda}=s+1\big\},\quad
\hbox{если } s+1\geq \lambda,
\end{equation}
и \begin{equation} \label{4889}
\mathbf{P}\big\{\xi_{\lambda}=s\big\}\geq\mathbf{P}\big\{\xi_{\lambda}=s-1\big\},\quad
\hbox{if } \lambda\geq s.
\end{equation}
Choosing the constant $ c_1 $ being large enough, we can ensure
that the number of lattice points in each interval $ I_j $ will be
less than the number of lattice points in each of the segments $
J_m $, $ m \in \mathbf Z $. Let $ I_k $ and $I_{k+1}$ be two
intervals $ I_j $, located closest to the point $ \lambda $. If
the second and third by proximity to the point $ \lambda $
intervals $ I_j $ are from it at the same distance, for
definiteness we take one of them, which is located between the $
\lambda $ and $ + \infty $. Inequalities
 \eqref{439}--\eqref{4889} imply that
\begin{equation} \label{3659}
\mathbf{P}\big\{\xi_{\lambda}\in I_j\big\}\leq
\mathbf{P}\big\{\xi_{\lambda}\in J_{j-1}\big\},\quad \hbox{if }
j\geq k+2,
\end{equation}
\begin{equation} \label{8859}
\mathbf{P}\big\{\xi_{\lambda}\in I_j\big\}\leq
\mathbf{P}\big\{\xi_{\lambda}\in J_{j}\big\},\quad \hbox{if }
j\leq k-1.
\end{equation}

It is well known that \begin{equation} \label{4219}
\max_{k\in\mathbf{Z}}\mathbf{P}\big\{\xi_{\lambda}=k\big\}\gg{\lambda}^{-1/2},\quad
\hbox{if } \lambda\geq 1.
\end{equation}
In order to prove  \eqref{4219} it  is sufficient to note that
 $$U_{\lambda}=U_{\lambda/n}^{n}\quad \hbox{for all positive integers  } n$$
 and and to use the Berry--Ess\'een inequality
(see
 \cite{AZ, P}). It is even easier to verify the validity of  \eqref{4219},
 applying the Stirling formula to the right-hand side of
\eqref{139} and taking into account  \eqref{439}--\eqref{4889}.

Inequality \eqref{4219} implies that
\begin{equation} \label{3219}
\max_{j\in\mathbf{Z}}\mathbf{P}\big\{\xi_{\lambda}\in
I_j\big\}\leq c\,\delta\,{\lambda}^{-1/2}\leq1/8,
\end{equation}if the constant $ c_2 $ is large enough.

Substituting inequalities  \eqref{3659}--\eqref{8859} in the
equality \eqref{619} and using relations  \eqref{659} and
\eqref{3219}, we get
\begin{equation} \label{6179}\mathbf{P}\big\{\xi_{\lambda}\in
A_x\big\} \leq1/4+\mathbf{P}\big\{\xi_{\lambda}\in
\mathbf{R}\setminus A_x\big\}=5/4-\mathbf{P}\big\{\xi_{\lambda}\in
A_x\big\}.
\end{equation}This implies inequality  \eqref{4669}. 
\end{proof}

Let us return to the evaluation of the right-hand side of
\eqref{4843}. It is easy to see that
$W_j=\mathcal{L}(-\delta_{j}^{-1}\,\xi_{\lambda_{j}})$ with
\begin{equation} \label{1309}
\delta_{j}=j,\quad \lambda_{j}=n_{j}(1-j^{-1}).
\end{equation}
By choosing  $n_{j}\geq2\,c_2\,j^{2}$, and by applying Lemma
\ref{lms} with $\delta=\delta_{j}$, $\lambda=\lambda_{j}$, we
obtain that, for ${j}\geq\max\big\{2,c_1\big\}$,
\begin{eqnarray}
 \sup_{x\in\mathbf{R}}W_j\big\{\mathbf{Z}^{1/8}+x\big\}  &=&
 \sup_{x\in\mathbf{R}}\mathbf{P}\big\{-\delta_{j}^{-1}\,\xi_{\lambda_{j}}
 \in\mathbf{Z}^{1/8}+x\big\} \nonumber\\
   &=& \sup_{x\in\mathbf{R}}\mathbf{P}\big\{\delta_{j}^{-1}\,\xi_{\lambda_{j}}
   \in\mathbf{Z}^{1/8}+x\big\}         \leq5/8 . \label{4759}
\end{eqnarray}
Inequalities \eqref{4843} and \eqref{4759} imply that
\begin{equation} \label{1009}
D_{j}\big\{\mathbf{Z}^{1/8}\big\}\leq5/8.
\end{equation}
At the same time,
\begin{equation} \label{1001}
F_{j}\big\{\mathbf{Z}\big\}=1.
\end{equation}
According to the definition  \eqref{254}, relations \eqref{1009}
and \eqref{1001} imply that \begin{equation} \label{1005}
\pi\big(F_{j},D_j\big)\geq1/8.
\end{equation}
Thus, the L\'evy--Prokhorov distance $ \pi \big (F_ j, D_j \big) $
does not tend to zero as $ j \to \infty $, in contrast to the $
\pi \big (F_ j, G_j \big) $. Of course, this is due to the fact
that the sequence of distributions $ F_ j $ is not relatively
compact.
\medskip

Inequalities \eqref{703}--\eqref{7431} and Example 1 allow us to
conclude that, in the case of identically distributed summands,
the distributions $ G_j $ may be always regarded as a good
approximation for the distributions $ F_ j $, while the
distributions $ D_j $ may be far from the distributions $ F_ j $
in the L\'evy--Prokhorov metric. Note that in some cases we have
$a_{j,k}=b_{j,k}$ and $D_j=G_j$. For example, if all considered
distributions are symmetric. Or if, in just considered Example 1,
 $\tau<1$ and $a_{j,k}=b_{j,k}=0$.\medskip

It is important here is that the distributions $ G_j $ are defined
for $ \tau_ j \to0 $, so that inequalities
 \eqref{703} and \eqref{7431}
ensure the closeness of distributions $G_j$ and $F_{j}$. At the
same time, in the definition
 \eqref{0} of $a_{j,k}$ the value of $ \tau $
is fixed and does not depend on $ j $.

Another important difference between  $b_{j,k}$ and $a_{j,k}$ is
that even if the distributions
 $U_{j,k}$ and $V_{j,k}$
are concentrated, respectively, on the intervals $ [- \tau_j,
\tau_j] $ and on the sets $ \mathbf R \setminus [- \tau_j, \tau_j]
$, then, for  $ \tau = \tau_j $, we have the equality
$a_{j,k}=(1-p_{j,k})\,b_{j,k}$. The presence of the factor
$(1-p_{j,k})$ in this equality leads to the fact that
distributions $ D_j $ and $ F_ j $ may be far from each other in
the L\'evy--Prokhorov metric, even if in the definition
 \eqref{0} of $a_{j,k}$ the
value of $ \tau = \tau_j $ depends on $ j $ and tends to zero as $
j \to \infty $. This is illustrated by the following Example 2
which is a modification of Example 1.

\medskip

\noindent\textbf{Example 2.} Now let
\begin{equation}
\label{5136}
F_{j,k}=(1-j^{-1})E_{-j^{-1}}+j^{-1}E_{1-j^{-1}},\quad
k=1,\ldots,n_{j}.
\end{equation}
Then conditions  \eqref{13} and \eqref{43} are satisfied  with
$$
U_{j,k}=E_{-j^{-1}},\quad V_{j,k}=E_{1-j^{-1}},\quad
p_{j,k}=p_{j}=j^{-1},\quad \tau_{j}=j^{-1}.$$ Furthermore,
$b_{j,k}=-j^{-1}$ and the distribution  $F_{j}E_{n_{j}j^{-1}}$ --
is binomial with parameters  $n_{j}$ and $j^{-1}$. Hence, the
distribution $ F_ j $ itself is a binomial distribution shifted by
$n_{j}j^{-1}$ to the left. The accompanying infinitely divisible
distribution
$$
G_{j}=E_{-n_{j}j^{-1}}e(n_j\,p_{j}\,E_1)
$$
is a similarly shifted Poisson distribution with parameter $ n_ j
\, p_ j $. In analogy to Example 1, the distance in variation
between the distributions $ F_ j $ and $ G_ j $ is bounded from
above by  $c\,p_{j}=c\,j^{-1}$ and tends to zero as
${j\to\infty}$.

As to the approximating accompanying infinitely divisible
distribution $ D_ j $, then if the values of   $a_{j,k}$ are
chosen by the formula \eqref{0} with $\tau=j^{-1}$, $j\geq3$, then
$a_{j,k}=-j^{-1}(1-j^{-1})$, $k=1,\ldots,n_{j}$, and
\begin{eqnarray}
  D_{j} &=& \prod_{k=1}^{n_{j}}\big(E_{a_{j,k}}\,
e\big(F_{j,k}\,E_{-a_{j,k}}\big)\big) \nonumber\\
   &=& E_{-n_{j}j^{-1}(1-j^{-1})}\, e\big(n_{j}(1-j^{-1})\,E_{-j^{-2}}\big)\,
e\big(n_{j}j^{-1}\,E_{1-j^{-2}}\big).\label{5483}
\end{eqnarray}
By choosing  $n_{j}\geq2\,c_2\,j^{4}$ and proceeding by analogy
with Example~1, it is easy to show that, for
${j}\geq\max\big\{3,c_1\big\}$ the L\'evy--Prokhorov distance $
\pi \big (F_ j, D_j \big) $ is separated from the zero and,
therefore, does not tend to zero as $ j \to \infty $, in contrast
to $ \pi \big (F_ j, G_j \big) $.

\medskip

In the multidimensional case the situation does not differ from
one-dimensional one. Let now
$\{X_{j,k},j=1,2,\ldots;k=1,\ldots,n_{j}\}$ be independent $d
$-dimensional random vectors with distributions  $F_{j,k}=\mathcal
L(X_{j,k})$, representable as
\begin{equation}
\label{1344} F_{j,k}=(1-p_{j,k})U_{j,k}+p_{j,k}V_{j,k},
\end{equation}
where $0\leq p_{j,k}\leq1$, the distributions  $U_{j,k}$,
$k=1,\ldots,n_{j}$, are concentrated on the Euclidean balls
$B_{\tau_j}$, centered at the origin and of radii  $\tau_j\geq0$,
$j=1,2,\ldots$,  $V_{j,k}$, $k=1,\ldots,n_{j}$, are arbitrary
distributions, and
\begin{equation}
\label{4344}\tau_j\to0\quad\hbox{and}\quad p_{j}=\max_{1\leq k\leq
n_{j}}p_{j,k}\to0\quad\hbox{ as }j\to\infty.
\end{equation} Let the distributions $ F_ j $ and $ G_j $ be defined by
 \eqref{255} and \eqref{436}, where
\begin{equation}
\label{035}b_{j,k}=\int_{\mathbf{R}^{d}}x
\,U_{j,k}\{dx\}.\end{equation} In author's paper  \cite{Z}, it was
shown that, under the conditions
 \eqref{1344}, \eqref{4344}
and \eqref{035}, we have
\begin{equation} \label{7032}L(F_{j}, G_{j})\ll_{d} p_{j}+\tau_{j}\,
\log^{\ast}\tau_{j}^{-1}.
\end{equation}
and \begin{equation} \label{7434}\pi(F_{j}, G_{j})\ll_{d}
\sum_{k=1}^{n_{j}}p_{j,k}^{2}+p_{j}+\tau_{j}\,\log^{\ast}\tau_{j}^{-1}.\end{equation}
If we assume additionally that the distributions  $V_{j,k}$ are
the same for all  $k=1,\ldots,n_{j}$, then
\begin{equation} \label{7435}\pi(F_{j}, G_{j})\ll_{d}
p_{j}+\tau_{j}\,\log^{\ast}\tau_{j}^{-1}.\end{equation} The
multidimensional analog of the L\'evy distance is defined just as
the L\'evy--Prokhorov distance, only the Borel sets should be
replaced by the parallelepipeds.

If, in addition to the infinitesimality condition \eqref{4344}, we
assume that
$$
\sum_{k=1}^{n_{j}}p_{j,k}^{2}\to0 \quad\hbox{as }j\to\infty,
$$
then $ \pi \big (F_ j, G_j \big) $ tends to zero. If the
distributions
 $V_{j,k}$ are the same for all
$ k = 1 , \ldots, n_ j $, then for $ \pi \big (F_ j, G_j \big)
\to0$ no additional assumptions are required.

In \cite{Z2}, one can find the estimates for the accuracy of
strong approximation of the corresponding random vectors which
follow from the estimates of the L\'evy--Prokhorov distance
 \eqref{7434} and \eqref{7435}.

\medskip

Yu.~A.~Davydov and V.~I.~ Rotar' \cite {DR} established, in
particular, that the sequences $ d $-di\-men\-sional distributions
approach each other in the L\'evy--Prokhorov metric if and only if
the integrals over these distributions of uniformly bounded
continuous functions do the same. It is clear that the above
distributions $ F_ j $ and $ G_j $ of a triangular array
satisfying the infinitesimality condition, provide a large number
of meaningful examples of approaching each other sequences of
distributions, including those which are not relatively compact.
\medskip

{\bf Acknowledgment} Research supported by grants
NSh-1216.2012.01, RFBR 10-01-00242 and 11-01-12104,  and by a
program of fundamental researches of Russian Academy of Sciences
``Modern problems of fundamental mathematics''.

\end{document}